

\documentclass[preprint,authoryear,12pt]{elsarticle}

\newtheorem{theorem}{Theorem}
\newproof{proof}{Proof}
\newdefinition{remark}{Remark}




\usepackage{amssymb}
\usepackage{amsmath}





\journal{Mathematical Social Sciences}

\begin{document}

\begin{frontmatter}



\title{Scatter and regularity imply Benford's Law... and more}


\author{N. Gauvrit\corref{cor1}}
\ead{adems@free.fr}
\address{Laboratoire Andr\'e Revuz, University Paris VII, France}

\author{J.-P. Delahaye}
\ead{jpd@lifl.fr}
\address{Laboratoire d'Informatique Fondamentale, USTL, Lille, France}

\cortext[cor1]{Corresponding author}

\begin{abstract}
A random variable (r.v.) X is said to follow Benford's law if $\log(X)$ is
uniform mod 1. Many experimental data sets prove to follow an approximate
version of it, and so do many mathematical series and continuous random
variables. This phenomenon received some interest, and several explanations
have been put forward. Most of them focus on specific data, depending on
strong assumptions, often linked with the log function.

Some authors hinted - implicitly - that the two most important
characteristics of a random variable when it comes to Benford are regularity
and scatter.

In a first part, we prove two theorems, making up a formal version of this
intuition: scattered and regular r.v.'s do approximately follow Benford's
law. The proofs only need simple mathematical tools, making the analysis
easy. Previous explanations thus become corollaries of a more general and
simpler one.

These results suggest that Benford's law does not depend on properties
linked with the log function. We thus propose and test a general version of
the Benford's law. The success of these tests may be viewed as an \emph{a
posteriori} validation of the analysis formulated in the first part.

\end{abstract}

\begin{keyword}
Benford's law \sep scatter \sep digit analysis

JEL code C16


\MSC[2010] 00A06 \sep 47N30

\end{keyword}

\end{frontmatter}




\section{Introduction}

First noticed by \citet{NEW81}, and again later by \citet{BEN38}, the
so-called Benford's law states that a sequence of "random" numbers should be
such that their logarithms are uniform mod 1. As a consequence, the
first non-zero digit of a sequence of "random" numbers is $d$ with
probability $\log \left( 1+\frac{1}{d}\right) $, an unexpectedly non-uniform
probability law. $\log $ here stands for the base 10 logarithm, but an easy
generalisation follows: a random variable (r.v.) conforms to \emph{base }$b$
Benford's law if its base $b$ logarithm $\log _{b}\left( X\right) $ is
uniform mod 1. \citet{LOL08} recently proved that no r.v. follows
base $b$ Benford's law for all $b.$

Many experimental data roughly conform to Benford's law (most of which no
more than roughly). However, the vast majority of real data sets that have
been tested do not fit this law at all. For instance, \citet{SCO01}
reported that only 12.6\% of 230 real data sets passed the test for
Benford's law. In his seminal paper, \citet{BEN38} tested 20 data sets
including lakes areas, length of rivers, populations, etc., half of which
did not conform to Benford's law.

The same is true of mathematical sequences or continuous r.v.'s. For
example, binomial arrays ${n \choose k},$ with $n\geq 1,$ $k\in \left\{
0,...,n\right\} ,$ tend toward Benford's law \citep{DIA77}, whereas
simple sequences such as $\left( 10^{n}\right) _{n\in \mathbb{N}}$ obviously
don't.

In spite of all this, Benford's law is actually used in the so-called
"digital analysis" to detect anomalies in pricing \citep{SEH05} or
frauds, for instance in accounting reports \citep{DRA00} or campaign finance \citep{CHO07}. Faked data indeed
usually depart from Benford's law more than real ones \citep{HIL88}. However,
\citet{HAL08} advise caution, arguing that real data do not always fit
the law.

Many explanations have been put forward to elucidate the appearance of
Benford's law on natural or mathematical data. Some authors focus on
particular random variables \citep{ENG03}, sequence \citep{JOL05}, 
real data \citep{BUR91}, or orbits of
dynamical systems \citep{BER04}. As a rule, other explanations
assume special properties of the data. \citet{HIL95b} or \citet{PIN61} shows
that scale invariance implies Benford's law. Base invariance is an other
sufficient condition \citep{HIL95a}. Mixtures of uniform distributions
\citep{JAN04} also conform to Benford's law, and so do the
limits of some random processes \citep{SHU08}. Multiplicative
processes have been mentioned as well \citep{PIE01}. Each of
these explanations accounts for some appearances of data fitting Benford's
law, but lacks generality.

While looking for a truly general explanation, some authors noticed that
data sets are more likely to fit Benford's law if they were scattered
enough. More precisely, a sequence should "cover several orders of
magnitude", as \citet{RAI76} expressed it. Of course, scatter alone is no
sufficient condition. The sequence 0.9, 9, 90, 900... indeed covers several
orders of magnitude, but is far from conforming to Benford's law. The
continuous random variables that are known to fit Benford's law usually
present some "regularity": exponential densities, normal densities, or
lognormal densities are of this kind. Invariance assumptions
(base-invariance or scale-invariance) lead to "regular" densities and so do
central limit-like theorem assumptions of mixture.

Some technical explanations may be viewed as a mathematical expression of
the idea that a random variable $X$ is more likely to conform to Benford's
law if it is regular and scattered enough. \cite[Example 4.1.]{MAR00} linked
Benford's law to Poincar\'{e}'s theorem in circular statistics, and \citet{SMI07} 
expressed it in terms of Fourier transforms and signal processing.
However, a non expert reader would hardly notice the smooth-and-scattered
implications of these developments.

Though scatter has been explicitly mentioned and regularity allusively
evoked, the idea that scatter and regularity (in a sense that will be made
clear further) may actually be a \emph{sufficient} explanation for Benford's
phenomenon related to continuous r.v.'s have never been formalized in a
simple way, to our knowledge, except in a recent article by \citet{FEW09}.
In this paper, Fewster hypothesizes that "\emph{any distribution [...] that
is reasonably smooth and covers several orders of magnitude is almost
guaranteed to obey Benford's law." }He then defines a smoothing procedure
for a r.v. $X$ based on $\left[ \pi ^{2}\left( x\right) \right] ^{\prime
\prime },$ $\pi $ being the probability density function (henceforth \emph{%
p.d.f.}) of $\log \left( X\right) ,$ and illustrates with a few eloquent
examples that under smoothness and scatter constraints, a r.v. cannot depart
much from Benford's law. However, no theorem is given that would formalise
this idea.

In the first part of this paper, we prove a theorem from which it follows
that scatter and regularity can be modelled in such a way that they, alone,
imply \emph{rough} compliance to Benford's law (again: real data usually do
not perfectly fit Benford's law, irrespective of the sample size).

It is not surprising that many data sets or random variables samples are
scattered and regular hence our explanation of Benford's phenomena
corroborates a widespread intuition. The proof of this theorem is
straightforward and requires only basic mathematical tools. Furthermore, as
we shall see, several of the existing explanations can be understood as
corollaries of ours. Our explanation encompasses more specific ones, and is
far simpler to understand and to prove.

Scatter and regularity do not presuppose any log-related properties (such as
the property of log-normality, scale-invariance, or multiplicative
properties). For this reason, if we are right, Benford's law should also
admit other versions. We set that a r.v. $X$ is $u$-Benford for a function $%
u $ if $u(X)$ is uniform mod 1. The classical Benford's law is thus
a special case of $u$-Benford's law, with $u=\log $. We test real data sets
and mathematical sequences for "$u$-Benfordness" with various $u$, and test
a second theorem echoing the first one. Most data conform to $u$-Benford's
law for different $u$, which is an argument in favour of our explanation.

\section{Scatter and regularity: a key to Benford}

The basic idea at the root of theorem 1 (below) is twofold.

First, we hypothesize that a continuous r.v. $X$ with density $f$ is almost
uniform mod 1 as soon as it is scattered and regular. More
precisely, any $f$ that is non-decreasing on $]-\infty ,a]$, and then
non-increasing on $[a,+\infty \lbrack $ (for regularity) and such that its
maximum $m=\sup (f)$ is "small" (for scatter) should correspond to a r.v. $X$
approaching uniformity mod 1. Figure 1 illustrates this idea.
\\

\includegraphics[scale=.4]{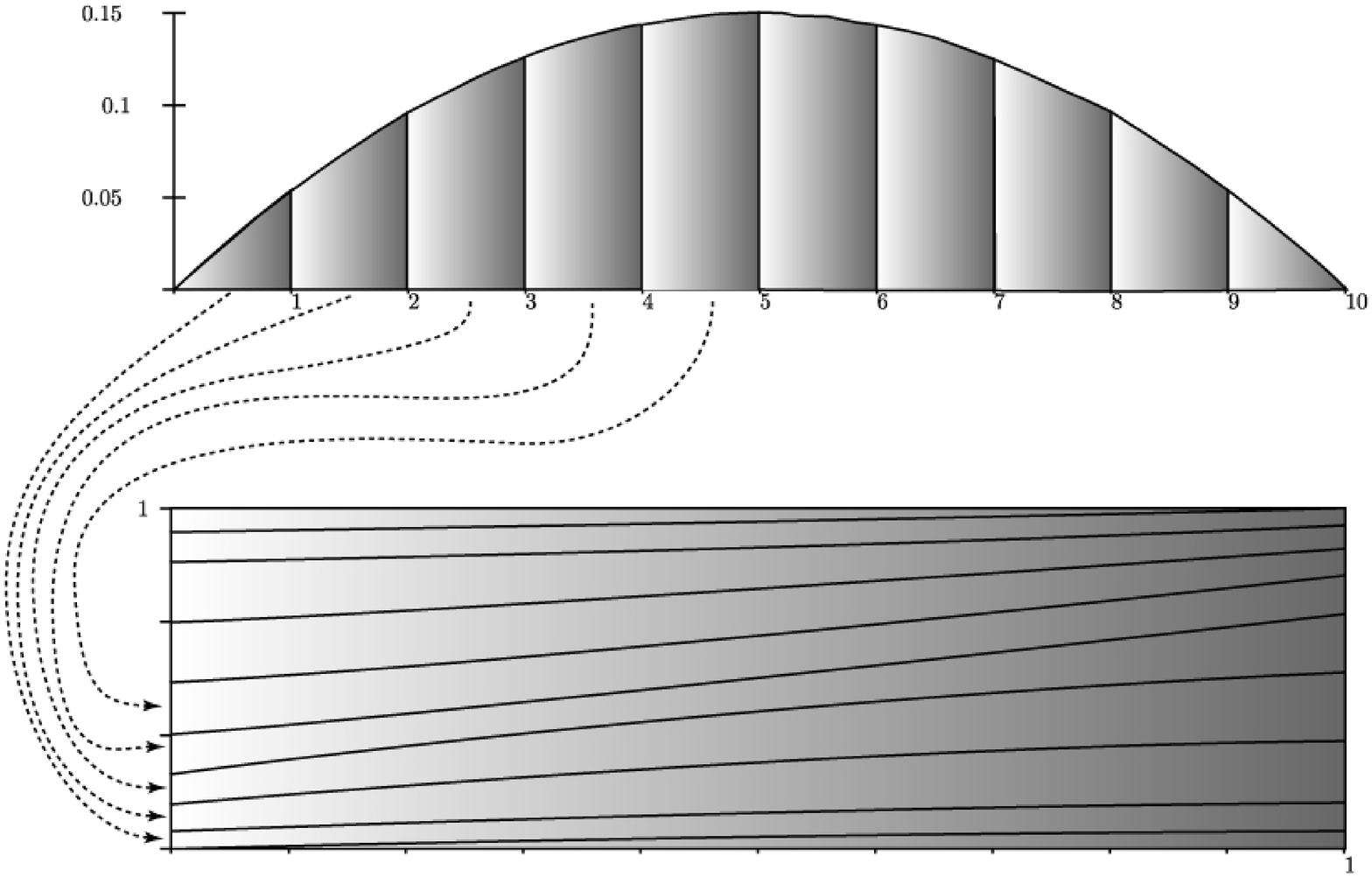} 

\begin{center}
Figure 1 --- Illustration of the idea that a regular
p.d.f. is almost bound to give rise to uniformity mod 1. The
stripes --- restrictions of the density on $\left[ n,n+1\right] $ --- of the
p.d.f. of a r.v. $X$ are stacked to form the p.d.f. of $X$ mod 1.
The slopes partly compensate, so that the resulting p.d.f. is almost
uniform. If the initial p.d.f. is linear on every $\left[ n,n+1\right] ,$
the compensation is perfect.
\end{center}

Second, note that if $X$ is scattered and regular enough, so should be $\log
\left( X\right) $. These two ideas are formalized and proved in theorem 1.

Henceforth, for any real number $x,$ $\left\lfloor x\right\rfloor $ will
denote the greatest integer not exceeding $x,$ and $\left\{ x\right\}
=x-\left\lfloor x\right\rfloor $. Any positive $x$ can be written as a
product $x=10^{\left\lfloor \log x\right\rfloor }.10^{\left\{ \log x\right\}
},$ and the Benford's law may be rephrased as the uniformity of the random
variable $\left\{ \log \left( X\right) \right\} .$

\begin{theorem}
Let $X$ be a continuous positive random variable with p.d.f. $f$ such that
 $Id.f:$ $x\longmapsto xf(x)$ conforms to the two following conditions~: 
 $\exists a>0$ such that (1) $\max (Id.f)=m=a.f(a)$ and (2) $Id.f$ is
nondecreasing on $]0,a],$ and nonincreasing on $[a,+\infty \lbrack .$ Then,
for any $z\in ]0,1],$
\begin{equation*}
\left\vert P\left( \left\{ \log X\right\} <z\right) -z\right\vert <2\ln
\left( 10\right) m.
\end{equation*}
In particular, $\left( X_{n}\right) $ being a sequence of continuous r.v.'s
with p.d.f. $f_{n}$ satisfying these conditions and such that $m_{n}=\max
\left( Id.f_{n}\right) \longrightarrow 0$, $\left\{ \log \left( X_{n}\right)
\right\} $ converges toward uniformity on $[0,1[$ in law.
\end{theorem}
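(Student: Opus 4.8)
The plan is to transfer everything to $Y=\log X$, whose p.d.f. I denote $\pi$, and then to quantify how far the ``wrapped'' law of $Y$ is from uniform. First I would compute $\pi$: from $X=10^Y$ the change of variables gives $\pi(y)=10^y\ln(10)\,f(10^y)=\ln(10)\,(Id.f)(10^y)$. Since $y\mapsto 10^y$ is increasing, the hypotheses on $Id.f$ transfer verbatim: $\pi$ is nondecreasing on $]-\infty,\log a]$ and nonincreasing on $[\log a,+\infty[$, i.e. unimodal, with maximum $M:=\ln(10)\,m$ attained at $y=\log a$. I would also record that $\pi$ vanishes at $\pm\infty$: as $\int_{\mathbb R}\pi=\int_0^{+\infty}f=1$ is finite, a monotone tail of $\pi$ with a strictly positive limit would make the integral diverge, so both limits are $0$. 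Consequently the total variation of $\pi$ over $\mathbb R$ equals $M+M=2M$.

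Next I would reduce the estimate to a sum of per-period defects. Writing $F(z):=P(\{\log X\}<z)=\sum_{n\in\mathbb Z}\int_n^{n+z}\pi$ and using $z=z\int_{\mathbb R}\pi=\sum_n z\int_n^{n+1}\pi$, I obtain
\[
F(z)-z=\sum_{n\in\mathbb Z}E_n(z),\qquad E_n(z)=\int_0^z\pi(n+s)\,ds-z\int_0^1\pi(n+s)\,ds.
\]
This is the analytic form of the stacking of Figure 1: were $\pi$ affine on each $[n,n+1]$, every $E_n(z)$ would vanish (the slopes telescope), so $E_n(z)$ captures only the deviation from linearity on the $n$-th period.

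The crux --- and what I expect to be the only real obstacle --- is the per-period bound together with the bookkeeping that sums the local oscillations to the total variation. Both $\int_0^z\pi(n+s)\,ds$ and $z\int_0^1\pi(n+s)\,ds$ lie in the interval $[\,z\inf_{[n,n+1]}\pi,\ z\sup_{[n,n+1]}\pi\,]$, so $|E_n(z)|\le z\,\omega_n$, where $\omega_n$ is the oscillation of $\pi$ on $[n,n+1]$; and $\omega_n\le \mathrm{Var}_{[n,n+1]}(\pi)$ always. Since the periods tile $\mathbb R$,
\[
|F(z)-z|\le z\sum_{n\in\mathbb Z}\omega_n\le z\sum_{n\in\mathbb Z}\mathrm{Var}_{[n,n+1]}(\pi)=z\,\mathrm{Var}_{\mathbb R}(\pi)=2\ln(10)\,m\,z.
\]
The unimodality of $Id.f$ is exactly what makes the last equality hold (a non-unimodal $\pi$ could have arbitrarily large variation for the same maximum). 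Strictness of the announced bound is then free: for $z<1$ the factor $z$ gives $2\ln(10)\,m\,z<2\ln(10)\,m$, while for $z=1$ each $E_n(1)=0$, so the left-hand side is $0$.

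Finally the limiting claim is immediate. If $m_n\to0$, the uniform estimate $\sup_{z\in]0,1]}|P(\{\log X_n\}<z)-z|\le 2\ln(10)\,m_n$ tends to $0$, so the c.d.f. of $\{\log X_n\}$ converges uniformly, hence pointwise, to $z\mapsto z$, the c.d.f. of the uniform law on $[0,1[$; as this limit is continuous, pointwise convergence of the c.d.f.'s is convergence in law. Apart from the oscillation/total-variation step, the argument uses only Tonelli's theorem to justify exchanging $\sum_n$ and $\int$ (legitimate since $\pi\ge0$), and is otherwise mechanical.
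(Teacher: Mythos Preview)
Your proof is correct, and in fact sharper than the paper's: you obtain $|F(z)-z|\le 2\ln(10)\,m\,z$, whereas the paper only proves $|F(z)-z|<2\ln(10)\,m$. The route is genuinely different. The paper works directly with the monotonicity of $\pi$: assuming without loss of generality that the mode lies in $[0,1)$, it observes that on each interval $[n,n+1]$ with $n\le -1$ the function $\pi$ is nondecreasing, so the average of $\pi$ over $[n,n+z]$ is at most its average over $[n,n+1]$ (hence $\frac{1}{z}\int_n^{n+z}\pi\le\int_n^{n+1}\pi$), with a symmetric inequality for $n\ge 2$, and handles the two central intervals $n=0,1$ by the crude estimate $\int_n^{n+z}\pi\le zM$; summing gives the claim. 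Your argument replaces this case analysis by the single observation $|E_n(z)|\le z\,\omega_n$ together with $\sum_n\omega_n\le\mathrm{Var}_{\mathbb R}(\pi)=2M$. This is cleaner and, more importantly, immediately generalises: your bound $|F(z)-z|\le z\,\mathrm{Var}_{\mathbb R}(\pi)$ holds for \emph{any} density $\pi$ of bounded variation, so the paper's Remark~1 (allowing finitely many monotonicity changes) drops out for free. The paper's version, by contrast, is slightly more elementary in that it never names total variation, which is in keeping with its stated aim of using only basic tools.
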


\begin{proof}
We first prove that for any continuous r.v. $Y$ with density $g$ such that $%
g $ is nondecreasing on $]-\infty ,b],$ and then nonincreasing on $%
[b,+\infty \lbrack ,$ the following holds:%
\begin{equation*}
\forall z\in ]0,1],~\left\vert P\left( \left\{ Y\right\} <z\right)
-z\right\vert <2M
\end{equation*}%
where $M=g\left( b\right) =\sup \left( g\right) .$

We may suppose without loss of generality that $b\in \lbrack 0,1[.$ Let $%
z\in ]0,1[$ (the case $z=0$ is obvious). Put $I_{n,z}=[n,n+z[.$ For any
integer $n\leq -1,$%
\begin{equation*}
\frac{1}{z}\int_{I_{n,z}}g(t)dt\leq \int_{n}^{n+1}g(t)dt.
\end{equation*}%
Thus%
\begin{equation*}
\frac{1}{z}\sum_{n\leq -1}\int_{I_{n,z}}g(t)dt\leq \int_{-\infty }^{0}g(t)dt.
\end{equation*}

For any integer $n\geq 2,$%
\begin{equation*}
\frac{1}{z}\int_{I_{n,z}}g(t)dt\leq \int_{n-1+z}^{n+z}g(t)dt,
\end{equation*}%
so%
\begin{equation*}
\frac{1}{z}\sum_{n\geq 2}\int_{I,z}g(t)dt\leq \int_{1+z}^{+\infty }g(t)dt.
\end{equation*}

Moreover, $\int_{I_{0,z}}g\leq zM$ and $\int_{I,z}g\leq zM.$ Hence,%
\begin{equation*}
\frac{1}{z}\sum_{n\in \mathbb{Z}}\int_{I_{n,z}}g\leq \int_{-\infty }^{\infty
}g+2M.
\end{equation*}%
We prove in the same fashion that%
\begin{equation*}
\frac{1}{z}\sum_{n\in \mathbb{Z}}\int_{I_{n,z}}g\geq \int_{-\infty }^{\infty
}g-2M.
\end{equation*}

Since $\sum_{\mathbb{Z}}\int_{I_{n,z}}g=P\left( \left\{ Y\right\} <z\right)
, $ $z<1$ and $\int_{-\infty }^{\infty }g=1,$ the result is proved.

Now, applying this to $Y=\log \left( X\right) $ proves theorem 1.
\end{proof}

\begin{remark}
The convergence theorem is still valid if we accept $f$ to have a finite
number of monotony changes, provided this number does not exceed a
previously fixed $k$. The proof is straightforward.
\end{remark}

\begin{remark}
The assumptions made on $Id.f$ may be seen as a measure of scatter and
regularity for $X,$ adjusted for our purpose.
\end{remark}

\section{Examples}

\subsection{Type I Pareto}

A continuous r.v. $X$ is type I Pareto with parameters $\alpha $ and $x_{0}$ 
$(\alpha ,~x_{0}\in \mathbb{R}_{+}^{\ast })$ iff it admits a density function%
\begin{equation*}
f_{x_{0},\alpha }\left( x\right) =\frac{\alpha x_{0}^{\alpha }}{x^{\alpha +1}%
}\mathbb{I}_{[x_{0},+\infty \lbrack }
\end{equation*}

Besides its classical use in income and wealth modelling, type I Pareto
variables arise in hydrology and astronomy \citep[page 252]{PAO06}.

The function $Id.f=g:$ $x\longmapsto \frac{\alpha x_{0}^{\alpha }}{x^{\alpha
}}\mathbb{I}_{[x_{0},\infty \lbrack }$ is decreasing. Its maximum is%
\begin{equation*}
\sup \left( Id.f\right) =Id.f\left( x_{0}\right) =\alpha .
\end{equation*}
Therefore, $X$ is nearly Benford-like, in the extent that%
\begin{equation*}
\left\vert P(\{\log X\}<z)-z\right\vert <2\ln (10)\alpha .
\end{equation*}

\subsection{Type II Pareto}

A r.v. $X$ is type II Pareto with parameter $b>0$ iff it admits a density
function defined by%
\begin{equation*}
f_{b}\left( x\right) =\frac{b}{\left( 1+x\right) ^{b+1}}\mathbb{I}%
_{[0,+\infty \lbrack }
\end{equation*}

It arises in a so-called \emph{mixture model}, with mixing components being
gamma distributed r.v.'s sequences.

The function $Id.f_{b}=g_{b}:$ $x\longmapsto \frac{bx}{\left( 1+x\right)
^{b+1}}\mathbb{I}_{[0,+\infty \lbrack }$ is $C^{\infty }\left( \mathbb{R}%
_{+}\right) ,$ with derivative%
\begin{equation*}
g_{b}^{\prime }\left( x\right) =\frac{b\left( 1-bx\right) }{\left(
x+1\right) ^{b+2}},
\end{equation*}%
which is positive whenever $x<\frac{1}{b},$ then negative. From this result
we derive%
\begin{equation*}
\sup g_{b}=g_{b}\left( \frac{1}{b}\right) =\frac{1}{\left( 1+\frac{1}{b}%
\right) ^{1+b}}=\left( \frac{b}{1+b}\right) ^{b+1},
\end{equation*}%
since%
\begin{equation*}
\ln \left[ \left( \frac{b}{1+b}\right) ^{b+1}\right] =\left( b+1\right) %
\left[ \ln b-\ln \left( b+1\right) \right] ,
\end{equation*}%
which tends toward $-\infty $ when $b$ tends toward 0,%
\begin{equation*}
\sup g_{b}\underset{b\longrightarrow 0}{\longrightarrow }0.
\end{equation*}%
Theorem 1 applies. It follows that $X$ conform toward Benford's law when $%
b\longrightarrow 0.$

\subsection{Lognormal distributions}

A r.v. $X$ is lognormal iff $\log \left( X\right) \thicksim N\left( \mu
,\sigma ^{2}\right) $. Lognormal distributions have been related to Benford
\citep{KOS06}. It is easy to prove that whenever $\sigma \longrightarrow
\infty ,$ $X$ tends toward Benford's law. Although the proof may use
different tools, a straightforward way to do it is theorem 1.

One classical explanation of Benford's law is that many data sets are
actually built through multiplicative processes \citep{PIE01}.
Thus, data may be seen as a product of many small effects. This may be
modelled by a r.v. $X$ that may be written as%
\begin{equation*}
X=\prod\limits_{i}Y_{i},
\end{equation*}%
$Y_{i}$ being a sequence of random variables. Using the $\log $
transformation, this leads to $\log \left( X\right) =\sum \log \left(
Y_{i}\right) .$

The \emph{multiplicative central-limit theorem} therefore proves that, under
usual assumptions, $X$ is bound to be almost lognormal, with $\log \left(
X\right) \thicksim N\left( \mu ,\sigma ^{2}\right) ,$ and $\sigma
\longrightarrow \infty ,$ thus roughly conforming to Benford, as an
application of theorem 1.

\section{Generalizing Benford}

If we are right to think that Benford's law is to be understood as a
consequence of mere scatter and regularity, instead of special
characteristics linked with multiplicative, scale-invariance, or whatever
log-related properties, we should be able to state, prove, and check on real
data sets, a generalized version of the Benford's law were some function $u$
replaces the $\log .$

Indeed, our basic idea is that $X$ being scattered and regular enough
implies $\log \left( X\right) $ to be scattered and regular as well, so that 
$\log \left( X\right) $ should be almost uniform mod 1. The same
should be true of any $u\left( X\right) ,$ $u$ being a function preserving
scatter and regularity. Actually, some $u$ should even be better shots than $%
\log ,$ since $\log $ reduces scatter on $[1,+\infty \lbrack .$

First, let us set out a generalized version of theorem 1, the proof of which
is closely similar to that of theorem 1.

\begin{theorem}
Let $X$ be a r.v. taking values in a real interval $I,$ with p.d.f. $f.$ Let 
$u$ be a $C^{1}$ increasing function $I\longrightarrow \mathbb{R}$, such
that $\frac{f}{u^{\prime }}:$ $x\longmapsto \frac{f(x)}{u^{\prime }\left(
x\right) }$ conforms to the following: $\exists a>0$ such that (1) $\max
\left( \frac{f}{u^{\prime }}\right) =m=\frac{f}{u^{\prime }}\left( a\right) $
and (2) $\frac{f}{u^{\prime }}$ is non-decreasing on $]0,a],$ and
non-increasing on $[a,+\infty \lbrack \cap I.$ Then, for all $z\in \lbrack
0,1[,$%
\begin{equation*}
\left\vert P\left( \left\{ u\left( X\right) \right\} <z\right) -z\right\vert
<2m.
\end{equation*}%
In particular, if $\left( X_{n}\right) $ is a sequence of such r.v.'s with
p.d.f. $f_{n}$ and $\max \left( f_{n}/u^{\prime }\right) =m_{n}$, and $%
\lim_{+\infty }\left( m_{n}\right) =0,$ then $\left\{ u\left( X_{n}\right)
\right\} $ converges in law toward $U\left( [0,1[\right) $ when $%
n\longrightarrow \infty .$
\end{theorem}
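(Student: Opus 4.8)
The plan is to reduce Theorem 2 to the same monotone-density estimate that was established inside the proof of Theorem 1. Recall that the core lemma proved there states: for any continuous random variable $Y$ with density $g$ that is nondecreasing on $]-\infty,b]$ and then nonincreasing on $[b,+\infty[$, one has $|P(\{Y\}<z)-z|<2M$ for all $z\in]0,1]$, where $M=\sup(g)=g(b)$. My goal is to apply this lemma to the transformed variable $Y=u(X)$, and the entire task is to identify the density of $u(X)$ and verify it satisfies the monotonicity and bounded-sup hypotheses.

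First I would compute the density of $Y=u(X)$. Since $u$ is $C^1$ and increasing on $I$, it is a bijection onto its image with differentiable inverse, so the change-of-variables formula gives the density of $Y$ as
\begin{equation*}
g(y)=\frac{f\bigl(u^{-1}(y)\bigr)}{u'\bigl(u^{-1}(y)\bigr)}=\Bigl(\tfrac{f}{u'}\Bigr)\bigl(u^{-1}(y)\bigr).
\end{equation*}
This is precisely the composition of the function $\frac{f}{u'}$ with the increasing map $u^{-1}$. Next I would transfer the hypotheses: because $u^{-1}$ is increasing, composing with it preserves monotonicity, so if $\frac{f}{u'}$ is nondecreasing on $]0,a]$ and nonincreasing on $[a,+\infty[\cap I$, then $g$ is nondecreasing up to $b=u(a)$ and nonincreasing thereafter. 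Composition with a bijection also preserves the supremum, so $\sup(g)=\sup(\frac{f}{u'})=m=g(b)$. Thus $Y=u(X)$ satisfies exactly the hypotheses of the core lemma with $M=m$.

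With that identification, the lemma yields $|P(\{u(X)\}<z)-z|<2m$ directly, which is the desired bound. The convergence statement then follows in the same manifest way as in Theorem 1: if a sequence $f_n$ of densities gives $m_n=\max(f_n/u')\to 0$, the uniform-in-$z$ bound $|P(\{u(X_n)\}<z)-z|<2m_n$ forces the cumulative distribution function of $\{u(X_n)\}$ to converge uniformly to that of the uniform law on $[0,1[$, hence convergence in law.

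The main obstacle I anticipate is purely a matter of careful bookkeeping rather than of deep mathematics: one must be attentive to the domain of $u$, since $I$ may be a proper subinterval of $\mathbb{R}$, and ensure that the single-peak monotonicity of $\frac{f}{u'}$ on $I$ translates correctly into single-peak monotonicity of $g$ on the image $u(I)$ (with $g$ understood to vanish outside $u(I)$). One should check that extending $g$ by zero off $u(I)$ does not create spurious monotonicity violations that would block the lemma—this is unproblematic when $I$ is an interval and $\frac{f}{u'}$ tends to its boundary values monotonically, but it is the one point where the phrase ``the proof is closely similar'' hides a small verification.
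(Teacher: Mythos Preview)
Your proposal is correct and is precisely the approach the paper has in mind: the paper does not give a separate proof of Theorem 2 but simply remarks that it is ``closely similar to that of theorem 1,'' and your reduction---computing the density of $Y=u(X)$ as $(f/u')\circ u^{-1}$ via change of variables and then invoking the core single-peak lemma from the proof of Theorem 1---is exactly that similarity made explicit. Your caveat about extending $g$ by zero outside $u(I)$ is a fair point of care but, as you note, causes no real difficulty.
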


A r.v. $X$ such that $\left\{ u\left( X\right) \right\} \thicksim U\left(
[0,1[\right) $ will be said $u$ -Benford henceforth.

\subsection{Sequence}

Although our two theorems only apply to continuous r.v.'s, the underlying
intuition that $\log $-Benford's law is only a special case (having,
however, a special interest thanks to its implication in terms of
leading-digits interpretation) of a more general law does also apply to
sequence. In this section, we experimentally test $u$-Benfordness for a few
sequences $\left( v_{n}\right) $ and a four functions $u.$

We will use six mathematical sequences. Three of them, namely $\left( \pi
n\right) _{n\in \mathbb{N}},$ prime numbers $\left( p_{n}\right) $, and $%
\left( \sqrt{n}\right) _{n\in \mathbb{N}}$ are known not to follow Benford.
The three others, $\left( n^{n}\right) _{n\in \mathbb{N}},$ $\left(
n!\right) _{n\in \mathbb{N}}$ and $\left( e^{n}\right) _{n\in \mathbb{N}}$
conform to Benford.

As for $u,$ we will focus on four cases:%
\begin{eqnarray*}
x &\longmapsto &\log \left[ \log \left( x\right) \right] \\
x &\longmapsto &\log \left( x\right) \\
x &\longmapsto &\sqrt{x} \\
x &\longmapsto &\pi x^{2}
\end{eqnarray*}

The first one increases very slowly, so we may expect that it will not work
perfectly. The second leads to the classical Benford's law. The $\pi $
coefficient of the last $u$ allows us to use integer numbers, for which $%
\left\{ x^{2}\right\} $ is nil.

The result of the experiment is given in Table 1.

\begin{center}
\begin{tabular}{|l|l|l|l|l|}
\hline
$v_{n}$ & $\log \circ \log \left( v_{n}\right) $ & $\log \left( v_{n}\right) 
$ & $\sqrt{v_{n}}$ & $\pi v_{n}^{2}$ \\ \hline\hline
$\sqrt{n}$ $\left( N=10~000\right) $ & $68.90~(.000)$ & $45.90~(.000)$ & $%
4.94~(.000)$ & $0.02~(.000)$ \\ \hline
$\pi n$ $\left( N=10~000\right) $ & $44.08~(.000)$ & $26.05~(.000)$ & $%
0.19~(1.000)$ & $0.80~(.544)$ \\ \hline
$p_{n}$ $\left( N=10~000\right) $ & $53.92~(.000)$ & $22.01~(0.000)$ & $%
0.44~(0,990)$ & $0.69~(.719)$ \\ \hline
$e^{n}$ $\left( N=1~000\right) $ & $6.91~\left( 0.000\right) $ & $%
0.76~\left( 1.000\right) $ & $0.63~\left( .815\right) $ & $0.79~\left(
.560\right) $ \\ \hline
$n!$ $\left( N=1~000\right) ^{(\ast )}$ & $7.39~\left( .000\right) $ & $%
0.58~\left( .887\right) $ & $0.61~\left( .844\right) $ & $0.90~\left(
.387\right) $ \\ \hline
$n^{n}$ $\left( N=1~000\right) ^{(\ast )}$ & $7.45$~$\left( .000\right) $ & 
$0.80~\left( .543\right) $ & $16.32~\left( .000\right) $ & $0.74~(.646)$
\\ \hline
\end{tabular}

Table 1 --- Results of the Kolmogorov-Smirnov tests applied on $\left\{
u\left( v_{n}\right) \right\} ,$ with four different functions $u$ (columns)
and six sequences (lines). Each sequence is tested through its first $N$
terms (from $n=1$ to $n=N),$ with an exception for $\log \circ \log \left(
n^{n}\right) $ and $\log \circ \log \left( n!\right) ,$ for which $n=1$ is
not considered. Each cell displays the Kolmogorov-Smirnov $z$ and the
corresponding $p$ value.
\end{center}

The sequences have been arranged according to the speed with which it
converges to $+\infty $ (and so are the functions $u)$. None of the six
sequences is $\log \circ \log $-Benford (but a faster divergent sequence
such as $\left( 10^{e^{n}}\right) $ would do). Only the last three are $\log 
$-Benford. These are the sequences going to $\infty $ faster than any
polynomial. Only one sequence $\left( n^{n}\right) $ does not satisfy $\sqrt{%
.}$-Benfordness. However, this can be understood as a pathological case,
since $\sqrt{n^{n}}$ is integer whenever $n$ is even, or is a perfect
square. Doing the same Kolmogorov-Smirnov test with odd numbers not being
perfect squares gives $z=0,45$ and $p=0,987,$ showing no discrepancy with $%
\sqrt{.}$-Benfordness for $\left( n^{n}\right) .$ All six sequences are $\pi
.^{2}$-Benford.

Putting aside the case of $\sqrt{n^{n}},$ what Table 1 reveals is that the
convergence speed of $u\left( v_{n}\right) $ completely determines the $u$%
-Benfordness of $\left( v_{n}\right) .$ More precisely, it seems that $%
\left( v_{n}\right) $ is $u$-Benford whenever $u\left( v_{n}\right) $
increases as fast as $\sqrt{n},$ and is not $u$-Benford whenever $u\left(
v_{n}\right) $ increase as slowly as $\ln \left( n\right) .$ Of course, this
rule-of-thumb is not to be taken as a theorem. Obviously enough, one can
actually decide to increase or decrease convergence speed of $u\left(
v_{n}\right) $ without changing $\left\{ u\left( v_{n}\right) \right\} ,$
adding or substracting \emph{ad hoc }integer numbers.

Nevertheless, this observation suggests that we give a closer look at
sequence $f\left( n\right) ,$ where $f$ is an increasing and concave real
function converging toward $\infty ,$ and look for a condition for $\left(
\left\{ f\left( n\right) \right\} \right) _{n}$ to converge to uniformity.
An intuitive idea is that $\left( \left\{ f\left( n\right) \right\} \right)
_{n}$ will depart from uniformity if it does not increase fast enough: we
may define brackets of integers --- namely $[f^{-1}\left( n\right)
,f^{-1}\left( n+1\right) -1[\cap \mathbb{N},$ within which $\left\lfloor
f\left( n\right) \right\rfloor $ is constant, and of course $\left\{ f\left(
n\right) \right\} $ increasing. If these brackets are "too large", the
relative height of the last considered bracket is so important that it
overcomes the first terms of the sequence $f\left( 0\right) ,...,f\left(
n\right) $ mod 1. In that case, there is no limit to the
probability distribution of $\left( \left\{ f\left( n\right) \right\}
\right) .$ The weight of the brackets should therefore be small relative to $%
f^{-1}\left( n\right) ,$ which may be written as%
\begin{equation*}
\frac{f^{-1}\left( n\right) -f^{-1}\left( n+1\right) }{f^{-1}\left( n\right) 
}\underset{\infty }{\longrightarrow }0.
\end{equation*}

Provided that $f$ is regular, this leads to%
\begin{equation*}
\frac{\left( f^{-1}\right) ^{\prime }\left( x\right) }{f^{-1}\left( x\right) 
}\underset{\infty }{\longrightarrow }0,
\end{equation*}%
or%
\begin{equation*}
\left[ \ln \left( f^{-1}\left( x\right) \right) \right] ^{\prime }\underset{%
\infty }{\longrightarrow }0.
\end{equation*}

Functions $f:x\longmapsto x^{\alpha },$ $\alpha >0$ satisfy this condition.
Any $n^{\alpha }$ should then show a uniform limit probability law, except
for pathological cases $\left( \alpha \in \mathbb{Q}\right) .$ Taking $%
\alpha =\frac{1}{\pi }$ gives (with $N=1000),$ a Kolmogorov-Smirnov $%
z=1,331, $ and a $p$-value 0.058, which means there is no significant
discrepancy from uniformity. On the other hand, the $\log $ function which
does not conform to this condition is such that $\left\{ \log \left(
n\right) \right\} $ is not uniform, confirming once again our rule-of-thumb
conjecture.

\subsection{Real data}

We test three data sets for $u$-Benfordness using a Kolmogorov-Smirnov test
for uniformity. First data set is the opening value of the Dow Jones, the
first day of each month from October 1928 to November 2007. The second and
third are country areas expressed in millions of square-km%
${{}^2}$
and the populations of the different countries, as estimated in 2008,
expressed in millions of inhabitants. The two last sequences are provided by
the CIA\footnote{%
http://www.cia.gov/library/publications/the-world-factbook/docs/rankorderguide.html%
}. Table 2 displays the results.

\begin{center}
\begin{tabular}{|l|l|l|l|l|}
\hline
& $\log \circ \log \left( v_{n}\right) $ & $\log \left( v_{n}\right) $ & $%
\sqrt{v_{n}}$ & $\pi v_{n}^{2}$ \\ \hline
Dow Jones $\left( N=950\right) $ & $5.90~(.000)$ & $5.20~(.000)$ & $%
0.75~(.635)$ & $0.44~(.992)$ \\ \hline
Area pays $(N=256)$ & $1.94~(.001)$ & $0.51~(.959)$ & $0.89~(.404)$ & $%
1,88~(.002)$ \\ \hline
Populations $\left( N=242\right) $ & $3.39~(.000)$ & $0.79~(.568)$ & $%
0.83~(.494)$ & $0.42~(.994)$ \\ \hline
\end{tabular}

Table 2 --- Results of the Kolmogorov-Smirnov tests applied on $\left\{
u\left( v_{n}\right) \right\} .$
\end{center}

This table confirms our analysis: classical Benfordness is actually less
often borne out than $\sqrt{.}$-Benfordness on these data. The last column
shows that our previous conjectured rule has exceptions: divergence speed is
not an absolute criterion by itself. For country areas, the fast growing $%
u:x\longmapsto \pi x^{2}$ gives a discrepancy from uniformity, whereas the
slow-growing $\log $ does not. However, allowing for exceptions, it is still
a good rule-of-thumb.

\subsection{Continuous r.v.'s}

Our theorems apply on continuous r.v.'s. We now focus on three examples of
such r.v.'s, with the same $u$ as above (except for $\log \circ \log ,$
which is not defined everywhere on $\mathbb{R}_{+}^{\ast }$): the uniform
density on $]0,k]$ $(k>0),$ exponential density, and absolute value of a
normal distribution.

\subsubsection{Uniform r.v.'s}

It is a known fact that a uniform distribution $X_{k}$ on $]0,k]$ $(k>0)$
does not approach classical Benfordness, even as a limit. On every bracket $%
[10^{j-1},10^{j}-1[,$ the leading digit is uniform. Therefore, taking $%
k=10^{j}-1$ leads to a uniform (and not logarithmic) distribution for
leading digits, whatever $j$ might be.

The density $g_{k}$ of $\sqrt{X_{k}}$ is%
\begin{equation*}
g_{k}\left( x\right) =\frac{2x}{k},~x\in \left] 0,\sqrt{k}\right]
\end{equation*}

and $g_{k}\left( x\right) =0$ otherwise. It is an increasing function on $%
]-\infty ,\sqrt{k}]$, decreasing on $[\sqrt{k},+\infty \lbrack $ with
maximum $\frac{2}{\sqrt{k}}\longrightarrow 0$ when $k\longrightarrow \infty .
$ Theorem 2 applies, showing that $X_{k}$ tends toward $\sqrt{.}$%
-Benfordness in law. Now, theorem 3 below proves that $X_{k}$ tends toward $u
$-Benfordness, when $u\left( x\right) =\pi x^{2}.$

\begin{theorem}
If $X$ follows a uniform density on $]0,k],$ $\left\{ \pi X^{2}\right\} $
converges in law toward uniformity on $[0,1[$ when $k\longrightarrow \infty .
$
\end{theorem}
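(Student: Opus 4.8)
The plan is to argue directly from the distribution of $Y=\pi X^{2}$ rather than to invoke Theorem 2, which in fact does not apply here. With $u(x)=\pi x^{2}$ we have $u'(x)=2\pi x$, so $\frac{f}{u'}(x)=\frac{1}{2\pi k x}$ on $]0,k]$ blows up as $x\to 0^{+}$ and has no finite maximum $m$; the bound $2m$ furnished by Theorem 2 is therefore vacuous. The point is that the monotone-density mechanism behind Theorems 1 and 2 still works once the singularity is isolated, because the mass it carries is small.

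First I would write down the density $g$ of $Y$. Since $X$ is uniform on $]0,k]$ and $u$ is increasing, $P(Y\le y)=P\!\left(X\le\sqrt{y/\pi}\right)=\frac{1}{k}\sqrt{y/\pi}$ for $y\in\,]0,\pi k^{2}]$, so $g(y)=\frac{1}{2k\sqrt{\pi y}}$ on $]0,\pi k^{2}]$ and $g=0$ elsewhere. This $g$ is nonincreasing on its support, with an integrable singularity at $0$ and a downward jump to $0$ at $\pi k^{2}$: it is precisely the degenerate case of the Theorem 1 argument in which the maximum sits at the left end, except that here $\sup g=+\infty$.

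Next I would estimate $P(\{Y\}<z)=\sum_{n\ge 0}\int_{n}^{n+z}g$ by re-running the comparison in the proof of Theorem 1, but splitting off the term $n=0$ that contains the singularity. For $n\ge 1$, monotonicity gives $\int_{n}^{n+z}g\le z\,g(n)\le z\int_{n-1}^{n}g$, and telescoping yields $\sum_{n\ge 1}\int_{n}^{n+z}g\le z\int_{0}^{\infty}g=z$; adding the explicit boundary term $\int_{0}^{z}g=\frac{\sqrt{z}}{k\sqrt{\pi}}$ gives $P(\{Y\}<z)\le z+\frac{\sqrt{z}}{k\sqrt{\pi}}$. For the reverse inequality, the elementary fact that for a nonincreasing $g$ the average over $[n,n+z]$ dominates the average over $[n,n+1]$ gives $\int_{n}^{n+z}g\ge z\int_{n}^{n+1}g$ for every $n\ge 0$ (the singular term $n=0$ included), and summing gives $P(\{Y\}<z)\ge z\int_{0}^{\infty}g=z$. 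These two bounds sandwich $P(\{Y\}<z)$ between $z$ and $z+\frac{\sqrt{z}}{k\sqrt{\pi}}$, so $\left|P(\{Y\}<z)-z\right|\le\frac{1}{k\sqrt{\pi}}\to 0$ uniformly in $z$, which is convergence in law toward $U([0,1[)$.

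The only real obstacle is the unbounded density at $0$, and the resolution is that the offending mass is confined to the single interval $[0,z[$ and equals $\int_{0}^{z}g=\frac{\sqrt{z}}{k\sqrt{\pi}}=O(1/k)$, so it is controlled by an explicit integral rather than by a pointwise supremum. Once this term is separated off, everything reduces to the same telescoping-and-averaging argument already used for Theorems 1 and 2; no idea beyond this localization is needed.
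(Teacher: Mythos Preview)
Your proof is correct and takes a genuinely different route from the paper. The paper works directly with the explicit c.d.f.\ $G(x)=\sqrt{x}/a$ (where $a=k\sqrt{\pi}$), writes $P_\delta$ as a sum of increments $\frac{1}{a}(\sqrt{j+\delta}-\sqrt{j})$, bounds each increment above and below via the concavity of $\sqrt{\cdot}$ by $\frac{\delta}{2\sqrt{j}}$ and $\frac{\delta}{2\sqrt{j+\delta}}$, and then compares the resulting sums $\sum 1/\sqrt{j}$ to integrals. Your argument instead stays at the level of the density and re-runs the Theorem~1/2 mechanism, observing that the averaging inequality $\frac{1}{z}\int_{n}^{n+z}g\ge \int_{n}^{n+1}g$ for nonincreasing $g$ needs no boundedness assumption (so the lower bound $P(\{Y\}<z)\ge z$ holds even through the singularity), while the upper bound is rescued by peeling off the single interval $[0,z[$ and computing its mass $\sqrt{z}/(k\sqrt{\pi})$ explicitly. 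What your approach buys is a clean one-sided error $0\le P(\{Y\}<z)-z\le \sqrt{z}/(k\sqrt{\pi})$, uniform in $z$, and a transparent explanation of why Theorem~2 ``almost'' applies: the only obstruction is the blow-up of $f/u'$ at $0$, and that contributes $O(1/k)$ mass. The paper's approach, by contrast, exploits the specific algebraic form of the c.d.f.\ and would not transfer as readily to other densities. (Minor quibble: your ``telescoping'' step is really just summing the inequalities $\int_{n}^{n+z}g\le z\int_{n-1}^{n}g$ over $n\ge 1$; nothing telescopes, but the bound $\sum_{n\ge 1}\int_{n}^{n+z}g\le z\int_{0}^{\infty}g=z$ is correct as written.)
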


\begin{proof}
Let $X\thicksim U\left( \left] 0,k\right] \right) .$ The p.d.f. $g$ of $%
Y=\pi X^{2}$ is%
\begin{equation*}
g\left( x\right) =\frac{1}{2a\sqrt{x}},~x\in ]0,a^{2}]
\end{equation*}%
where $a=k\sqrt{\pi }.$

The c.d.f. $G$ of $Y$ is then%
\begin{equation*}
G\left( x\right) =\frac{\sqrt{x}}{a},~x\in ]0,a^{2}].
\end{equation*}%
Let now $\delta \in ]0,1[.$ Call $P_{\delta }$ the probability that $\left\{
Y\right\} <\delta $.%
\begin{equation*}
\sum_{j=0}^{\left\lfloor a^{2}-\delta \right\rfloor }G\left( j+\delta
\right) -G\left( j\right) \leq P_{\delta }\leq \sum_{j=0}^{\left\lfloor
a^{2}-\delta \right\rfloor +1}G\left( j+\delta \right) -G\left( j\right)
\end{equation*}%
\begin{equation*}
\frac{1}{a}\sum_{j=0}^{\left\lfloor a^{2}-\delta \right\rfloor }\sqrt{%
j+\delta }-\sqrt{j}\leq P_{\delta }\leq \frac{1}{a}\sum_{j=0}^{\left\lfloor
a^{2}-\delta \right\rfloor +1}\sqrt{j+\delta }-\sqrt{j}
\end{equation*}%
The square-root function being concave,%
\begin{equation*}
\sqrt{j+\delta }-\sqrt{j}\geq \frac{\delta }{2\sqrt{j+\delta }}
\end{equation*}%
and, for any $j>0,$%
\begin{equation*}
\sqrt{j+\delta }-\sqrt{j}\leq \frac{\delta }{2\sqrt{j}}.
\end{equation*}%
Hence,%
\begin{eqnarray*}
\frac{\delta }{2a}\sum_{j=0}^{\left\lfloor a^{2}-\delta \right\rfloor }\frac{%
1}{\sqrt{j+\delta }} &\leq &P_{\delta }\leq \frac{1}{a}\left[ \sqrt{\delta }%
+\sum_{1}^{\left\lfloor a^{2}-\delta \right\rfloor +1}\frac{\delta }{2\sqrt{j%
}}\right] \\
\frac{\delta }{2a}\sum_{j=0}^{\left\lfloor a^{2}-\delta \right\rfloor }\frac{%
1}{\sqrt{j+\delta }} &\leq &P_{\delta }\leq \frac{\sqrt{\delta }}{a}+\frac{%
\delta }{2a}\sum_{1}^{\left\lfloor a^{2}-\delta \right\rfloor +1}\frac{1}{%
\sqrt{j}}
\end{eqnarray*}%
$x\longmapsto \frac{1}{\sqrt{x}}$ being decreasing,%
\begin{equation*}
\sum_{j=0}^{\left\lfloor a^{2}-\delta \right\rfloor }\frac{1}{\sqrt{j+\delta 
}}\geq \int\limits_{\delta }^{\left\lfloor a^{2}-\delta \right\rfloor
+1+\delta }\frac{1}{\sqrt{t}}dt\geq 2\left[ \sqrt{\left\lfloor a^{2}-\delta
\right\rfloor +1+\delta }-\sqrt{\delta }\right]
\end{equation*}%
and%
\begin{equation*}
\sum_{1}^{\left\lfloor a^{2}-\delta \right\rfloor +1}\frac{1}{\sqrt{j}}\leq
\int_{0}^{\left\lfloor a^{2}-\delta \right\rfloor +1}\frac{1}{\sqrt{t}}%
dt\leq 2\left[ \sqrt{\left\lfloor a^{2}-\delta \right\rfloor +1}\right] .
\end{equation*}%
So,%
\begin{equation*}
\frac{\delta }{a}\left[ \sqrt{\left\lfloor a^{2}-\delta \right\rfloor
+1+\delta }-\sqrt{\delta }\right] \leq P_{\delta }\leq \frac{\delta }{a}%
\left[ \sqrt{\left\lfloor a^{2}-\delta \right\rfloor +1}\right] .
\end{equation*}%
As a consequence, for any fixed $\delta $, $\lim_{a\longrightarrow \infty
}\left( P_{\delta }\right) =\delta ,$ and $\left\{ \pi X^{2}\right\} $
converges in law to uniformity on $[0,1[$.
\end{proof}

\subsubsection{Exponential r.v.'s}

Let $X_{\lambda }$ be an exponential r.v. with p.d.f. $f_{\lambda }\left(
x\right) =\lambda \exp \left( -\lambda x\right) $ $\left( x\geq 0,\lambda
>0\right) .$ Engel and Leuenberger [2003] demonstrated that $X_{\lambda }$
tends toward the Benford's law when $\lambda \longrightarrow 0.$

The p.d.f. of $\sqrt{X_{\lambda }}$ is $x\longmapsto 2\lambda x\exp \left(
-\lambda x^{2}\right) ,$ which increases on $\left] 0,\frac{1}{2\lambda }%
\right] $ and then decreases. Its maximum is $\exp \left( -\frac{1}{4\lambda 
}\right) .$ Theorem 2 thus applies, showing that $X_{\lambda }$ is $\sqrt{.}$%
-Benford as a limit when $\lambda \longrightarrow 0.$

Finally, theorem 4 below demonstrates that $X_{\lambda }$ tends toward $u$%
-Benfordness for $u\left( x\right) =\pi x^{2}$ as well.

\begin{theorem}
If $X\thicksim EXP\left( \lambda \right) $ (with p.d.f. $f:x\longmapsto
\lambda \exp \left( -\lambda x\right) ),$ then $Y=\pi X^{2}$ converges
toward uniformity mod 1 when $\lambda \longrightarrow 0.$
\end{theorem}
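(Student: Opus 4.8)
The plan is to imitate the proof of Theorem~3, working directly with the cumulative distribution function of $Y=\pi X^{2}$. Theorem~2 is not available here: taking $u(x)=\pi x^{2}$ gives $f/u'=\lambda e^{-\lambda x}/(2\pi x)$, which is unbounded as $x\to 0^{+}$ and strictly decreasing thereafter, so it has no finite maximum and hypothesis~(1) of Theorem~2 fails. First I would compute the c.d.f. of $Y$. Writing $c=\lambda/\sqrt{\pi}$, for $y\geq 0$ one has $G(y)=P(\pi X^{2}\leq y)=P(X\leq\sqrt{y/\pi})=1-e^{-c\sqrt{y}}$. Since $Y$ is continuous, for $\delta\in\,]0,1[$ the probability $P_{\delta}:=P(\{Y\}<\delta)$ decomposes over the intervals $[j,j+\delta)$ as
\[
P_{\delta}=\sum_{j\ge 0}\bigl[G(j+\delta)-G(j)\bigr].
\]

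The structural fact I would exploit is that $G$ is concave on $\,]0,+\infty[\,$: a direct computation gives $G''(y)=-\tfrac{c}{4}e^{-c\sqrt{y}}\bigl(y^{-3/2}+c\,y^{-1}\bigr)<0$, so $G'$ is decreasing. The main obstacle, and the reason this case needs its own theorem, is that $G'$ (equivalently the density of $Y$) blows up at the origin, so the $j=0$ term must be treated separately. I would isolate it as $G(\delta)-G(0)=1-e^{-c\sqrt{\delta}}$, which tends to $0$ as $\lambda\to 0$. For $j\geq 1$ the monotonicity of $G'$ yields $\delta\,G'(j+\delta)\leq G(j+\delta)-G(j)\leq\delta\,G'(j)$, whence
\[
\bigl(1-e^{-c\sqrt{\delta}}\bigr)+\delta\sum_{j\ge 1}G'(j+\delta)\ \leq\ P_{\delta}\ \leq\ \bigl(1-e^{-c\sqrt{\delta}}\bigr)+\delta\sum_{j\ge 1}G'(j).
\]

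To control the two series I would again use that $G'$ is decreasing and compare with integrals: the upper series satisfies $\sum_{j\ge 1}G'(j)\leq\int_{0}^{\infty}G'(t)\,dt=1$, while the lower series satisfies $\sum_{j\ge 1}G'(j+\delta)\geq\int_{1+\delta}^{\infty}G'(t)\,dt=e^{-c\sqrt{1+\delta}}$. Substituting these bounds gives the pinching
\[
\bigl(1-e^{-c\sqrt{\delta}}\bigr)+\delta\,e^{-c\sqrt{1+\delta}}\ \leq\ P_{\delta}\ \leq\ \bigl(1-e^{-c\sqrt{\delta}}\bigr)+\delta.
\]
Finally, letting $\lambda\to 0$ (equivalently $c\to 0$) sends both the left- and right-hand sides to $\delta$, so by the squeeze theorem $P_{\delta}\to\delta$ for every fixed $\delta$, which is precisely convergence of $\{\pi X^{2}\}$ in law toward $U([0,1[)$. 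I expect the only delicate points to be the separate bookkeeping of the $j=0$ term and the verification that the \emph{infinite} sums (the domain here is unbounded, unlike the finite sum in Theorem~3) are correctly bracketed by the integrals of the decreasing function $G'$.
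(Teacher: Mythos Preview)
Your proposal is correct and follows essentially the same approach as the paper's proof: compute the c.d.f.\ $G(y)=1-e^{-c\sqrt{y}}$, use the concavity of $G$ (equivalently, the paper's convexity of $e^{-\mu\sqrt{x}}$) to sandwich each increment $G(j+\delta)-G(j)$ between $\delta G'(j+\delta)$ and $\delta G'(j)$, isolate the $j=0$ term where $G'$ blows up, and compare the remaining sums to integrals of the decreasing density. The only cosmetic differences are that the paper keeps the $j=0$ term inside the lower sum (the convexity lower bound is valid there) and obtains $\delta e^{-\mu\sqrt{\delta}}\le P_\delta\le (1-e^{-\mu\sqrt{\delta}})+\delta$, whereas you separate $j=0$ on both sides and get the equally effective bounds $(1-e^{-c\sqrt{\delta}})+\delta e^{-c\sqrt{1+\delta}}\le P_\delta\le(1-e^{-c\sqrt{\delta}})+\delta$.
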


\begin{proof}
Let $X$ be such a r.v. $Y=\pi X^{2}$ has density $g$ with%
\begin{equation*}
g\left( x\right) =\frac{\mu }{2\sqrt{x}}\exp \left( -\mu \sqrt{x}\right)
,~x\geq 0
\end{equation*}%
where $\mu =\frac{\lambda }{\sqrt{\pi }}.$ The $Y$ c.d.f. $G$ is thus, for
all $x\geq 0$%
\begin{equation*}
G\left( x\right) =1-e^{-\mu \sqrt{x}}.
\end{equation*}%
Let $P_{\delta }$ denote the probability that $\left\{ Y\right\} <\delta ,$
for $\delta \in ]0,1[.$%
\begin{equation*}
P_{\delta }=\sum_{j=0}^{\infty }\left[ e^{-\mu \sqrt{j}}-e^{-\mu \sqrt{%
j+\delta }}\right] 
\end{equation*}%
$x\longmapsto \exp \left( -\mu \sqrt{x}\right) \ $being convex,%
\begin{equation*}
\delta \frac{\mu }{2\sqrt{j+\delta }}e^{-\mu \sqrt{j+\delta }}\leq e^{-\mu 
\sqrt{j}}-e^{-\mu \sqrt{j+\delta }}
\end{equation*}%
for any $j\geq 0,$ and%
\begin{equation*}
e^{-\mu \sqrt{j}}-e^{-\mu \sqrt{j+\delta }}\leq \delta \frac{\mu }{2\sqrt{j}}%
e^{-\mu \sqrt{j}}
\end{equation*}%
for any $j>0.$ Thus%
\begin{equation*}
\delta \sum_{j=0}^{\infty }\frac{\mu }{2\sqrt{j+\delta }}e^{-\mu \sqrt{%
j+\delta }}\leq P_{\delta }\leq 1-e^{-\mu \sqrt{\delta }}+\delta
\sum_{j=1}^{\infty }\frac{\mu }{2\sqrt{j}}e^{-\mu \sqrt{j}}.
\end{equation*}%
$x\longmapsto \frac{1}{\sqrt{x}}\exp \left( -\mu \sqrt{x}\right) $ being
decreasing,%
\begin{eqnarray*}
\delta \sum_{j=0}^{\infty }\frac{\mu }{2\sqrt{j+\delta }}e^{-\mu \sqrt{%
j+\delta }} &\geq &\delta \int_{\sqrt{\delta }}^{\infty }\frac{\mu }{2\sqrt{t%
}}e^{-\mu \sqrt{t}}dt \\
&\geq &\delta \left[ -e^{-\mu \sqrt{t}}\right] _{\sqrt{\delta }}^{\infty } \\
&=&\delta e^{-\mu \sqrt{\delta }},
\end{eqnarray*}%
and%
\begin{eqnarray*}
1-e^{-\mu \sqrt{\delta }}+\delta \sum_{j=1}^{\infty }\frac{\mu }{2\sqrt{j}}%
e^{-\mu \sqrt{j}} &\leq &1-e^{-\mu \sqrt{\delta }}+\delta \int_{0}^{\infty }%
\frac{\mu }{2\sqrt{t}}e^{-\mu \sqrt{t}}dt \\
&\leq &1-e^{-\mu \sqrt{\delta }}+\delta 
\end{eqnarray*}%
The two expressions tend toward $\delta $ when $\mu \longrightarrow 0,$ so
that $P_{\delta }\longrightarrow \delta .$ The proof is complete.
\end{proof}

\subsubsection{Absolute value of a normal distribution}

To test the absolute value of a normal distribution $X$ with mean 0 and
variance $10^{8},$ we picked a sample of 2000 values and used the same
procedure as for real data. It appears, as shown in Table 3, that $X$
significantly departs from $u$-Benfordness with $u=\log $ and $u=\pi .^{2},$
but not with $u=\sqrt{.}.$

\begin{center}
\begin{tabular}{|l|l|l|l|}
\hline
& $\log \left( X\right) $ & $\sqrt{X}$ & $\pi X^{2}$ \\ \hline
$U\left( [0,k[\right) ~k\longrightarrow \infty $ & NO & YES & YES \\ \hline
$EXP\left( \lambda \right) ~\lambda \longrightarrow 0$ & YES & YES & YES \\ 
\hline
$\left\vert \mathcal{N}\left( 0,10^{8}\right) \right\vert $ & $14.49~\left(
.000\right) $ & $0.647~\left( .797\right) $ & $28.726~\left( .000\right) $
\\ \hline
\end{tabular}

Table 3 --- The table displays if uniform distributions, exponential
distributions, and absolute value of a normal distribution, are $u$-Benford
for different functions $u,$ or not. The last line shows the results (and $p$%
-values) of the Kolmogorov-Smirnov tests applied to a 2000-sample. It could
be read as "NO; YES; NO".
\end{center}

As we already noticed, the best shot when one is looking for Benford seems
to be the square-root rather than $\log .$

\section{Discussion}

Random variables exactly conforming the Benford's classical law are rare,
although many do roughly approach the law. Indeed, many explanations have
been proposed for this approximate law to hold so often. These explanations
involve complex characteristics, sometimes directly related to logarithms,
sometimes through multiplicative properties.

Our idea --- formalized in theorem 1 --- is more simple and general. The
fact that real data often are regular and scattered is intuitive. What we
proved is an idea which has been recently expressed by Fewster [2009]:
scatter and regularity are actually \emph{sufficient} condition to
Benfordness.

This fact thus provides a new explanation of Benford's law. Other
explanations, of course, are acceptable as well. But it may be argued that
some of the most popular explanations are in fact corollaries of our
theorem. As we have seen when studying Pareto type II density, mixtures of
distributions may lead to regular and scattered density, to which theorem 1
applies. Thus, we may argue that a mixture of densities is nearly Benford 
\emph{because} it is necessarily scattered and regular. In the same fashion,
multiplications of effects lead to Benford-like densities, but also (as the
multiplicative central-limit theorem states) to regular and scattered
densities.

Apart from the fact that our explanation is simpler and (arguably) more
general, a good argument in its favor is that Benfordness may be generalized
--- unlike log-related explanations. Scale invariance or multiplicative
properties are log-related. But as we have seen, Benfordness is not
dependant on log, and can easily be generalized. Actually, it seems that
square root is a better candidate than log. The historical importance of $%
\log $-Benfordness is of course due to the implications in terms of leading
digits which bears no equivalence with square-root.




















\bibliographystyle{elsarticle-harv}
\bibliography{benford}







\end{document}